\begin{document}

\theoremstyle{plain}
\newtheorem{thm}{Theorem}[section]
\newtheorem{lem}[thm]{Lemma}
\newtheorem{prop}[thm]{Proposition}
\newtheorem{cor}[thm]{Corollary}
\theoremstyle{definition}
\newtheorem{defn}[thm]{Definition}
\newtheorem{remark}{Remark}
\newtheorem{condition}[thm]{Condition}
\newtheorem{example}[thm]{Example}
\newtheorem{claim}[thm]{Claim}

\newcommand{\oset}{\N \cup \{\omega\}}
\newcommand{\al}{ \alpha }
\newcommand{\monoset}{\Lambda}
\newcommand{\monosetelement}{r}
\newcommand{\monotoneelement}{u}
\newcommand{\monotonelast}{u_{\omega}}
\newcommand{\neib}[2]{N_{#1}(\{#2\})}
\newcommand{\denssetelement}{\Psi}
\newcommand{\realdist}[2]{|{#1}-{#2}|}
\newcommand{\tbht}{Topological Baumgartner-Hajnal Theorem}
\newcommand{\irt}{Infinite Ramsey Theorem}
\newcommand{\schipperus}{Schipperus}

\newcommand{\ball}{\operatorname{Ball}}
\newcommand{\ballreal}{\ball_{\R}}
\newcommand{\homesubsets}[1]{${#1}$-element subsets}
\newcommand{\containsitslimit}{contains its limit point}

\newcommand{\monotoneset}[1]{\{{\monotoneelement{}}_{\al} \in {#1}:{\al} \in \N \cup \{\omega\}\}}
\newcommand{\monotonesetnolast}[1]{\{{\monotoneelement{}}_{\al}  \in {#1}:{\al} \in \N\}}

\newcommand{\sbinseq}{\mathcal{B}}
\newcommand{\fbinseq}{\mathcal{A}}
\newcommand{\fbin}{\theta^s}
\newcommand{\sbin}{\delta^s}
\newcommand{\cv}{c}

\newcommand{\map}[1]{\tilde{#1}}
\newcommand{\per}[4]{\mathcal P_{#1,#2}(\vec{#3},#4)}

\newcommand{\persimple}[5]{\mathcal P_{#1,#2}(\vec{#3},\vec{#4},\vec{#5})}
\newcommand{\persimplelong}[5]{\persing{#1}{#3_{1}}{#4_{1}}{#5_{1}}\times \dots \times \persing{#1}{#3_{#2}}{#4_{#2}}{#5_{#2}} }

\newcommand{\multP}[3]{P_{f,\vec{#1},\vec{#2},\vec{#3}}}

\newcommand{\squaref}[3]{\mathcal S_{#1,#2}(#3)}
\newcommand{\persing}[4]{\mathcal P_{#1}(#2,#3,#4)}
\newcommand{\squarefsing}[2]{\mathcal S_{#1}(#2)}

\newcommand{\gmatrix}[3]{\begin{pmatrix}
 #1_{1,1} & #1_{1,2} & \cdots & #1_{1,#3} \\
 #1_{2,1} & #1_{2,2} & \cdots & #1_{2,#3} \\
 \vdots   & \vdots   & \ddots & \vdots && \\
 #1_{#2,1}& #1_{#2,2}& \cdots & #1_{#2,#3}
\end{pmatrix}}
\newcommand{\gvector}[2]{\begin{pmatrix}
#1_{1} \\
#1_{2}\\
\vdots \\
#1_{#2}
\end{pmatrix}}
\newcommand{\gvectorstart}[3]{\begin{pmatrix}
#1_{#2}\\
\vdots \\
#1_{#3}
\end{pmatrix}}
\newcommand{\partiald}[2]{\frac{\partial {#1}}{\partial #2}}
\newcommand{\ordinaryd}[2]{\frac{\d {#1}}{\d #2}}
\newcommand{\polring}[3]{#1[#3_1,\dots,#3_{#2}]}
\newcommand{\polfield}[3]{#1(#3_1,\dots,#3_{#2})}
\newcommand{\btwn}[3]{{#1}\;,{#2} \leq {#1} \leq {#3}}

\newcommand{\av}{a}
\newcommand{\bv}{b}

\newcommand{\uv}{u}

\newcommand{\bbox}{\operatorname{Box}}

\newcommand{\rank}{\operatorname{Rank}}
\newcommand{\spanv}{\operatorname{Span}}
\newcommand{\image}{\operatorname{Image}}
\newcommand{\chr}{\operatorname{Char}}

\numberwithin{equation}{section}

\newcommand{\Z}{{\mathbb Z}} 
\newcommand{\Q}{{\mathbb Q}}
\newcommand{\R}{{\mathbb R}}
\newcommand{\C}{{\mathbb C}}
\newcommand{\N}{{\mathbb N}}
\newcommand{\FF}{{\mathbb F}}
\newcommand{\fe}{\overline{\mathbb F}}
\newcommand{\fq}{\mathbb{F}_q}
\newcommand{\feq}{\overline{\mathbb F}_q}

\newcommand{\rmk}[1]{\footnote{{\bf Comment:} #1}}

\renewcommand{\mod}{\;\operatorname{mod}}
\newcommand{\ord}{\operatorname{ord}}
\newcommand{\TT}{\mathbb{T}}
\renewcommand{\i}{{\mathrm{i}}}
\renewcommand{\d}{{\mathrm{d}}}
\newcommand{\HH}{\mathbb H}
\newcommand{\Vol}{\operatorname{vol}}
\newcommand{\area}{\operatorname{area}}
\newcommand{\tr}{\operatorname{tr}}
\newcommand{\norm}{\mathcal N} 
\newcommand{\intinf}{\int_{-\infty}^\infty}
\newcommand{\ave}[1]{\left\langle#1\right\rangle} 
\newcommand{\Var}{\operatorname{Var}}
\newcommand{\Prob}{\operatorname{Prob}}
\newcommand{\sym}{\operatorname{Sym}}
\newcommand{\disc}{\operatorname{disc}}
\newcommand{\CA}{{\mathcal C}_A}
\newcommand{\cond}{\operatorname{cond}} 
\newcommand{\lcm}{\operatorname{lcm}}
\newcommand{\Kl}{\operatorname{Kl}} 
\newcommand{\leg}[2]{\left( \frac{#1}{#2} \right)}  

\newcommand{\sumstar}{\sideset \and^{*} \to \sum}

\newcommand{\LL}{\mathcal L} 
\newcommand{\sumf}{\sum^\flat}
\newcommand{\Hgev}{\mathcal H_{2g+2,q}}
\newcommand{\USp}{\operatorname{USp}}
\newcommand{\conv}{*}
\newcommand{\dist} {\operatorname{dist}}
\newcommand{\CF}{c_0} 
\newcommand{\kerp}{\mathcal K}

\newcommand{\fs}{\mathfrak S}
\newcommand{\rest}{\operatorname{Res}} 
\newcommand{\af}{\mathbb A} 
\newcommand{\Ht}{\operatorname{Ht}}
\newcommand{\monic}{\mathcal M}

\title
[Infinite closed monochromatic subsets of a metric space] {Infinite closed monochromatic subsets of a metric space}
\author{Shai Rosenberg}

\email{shairos1@mail.tau.ac.il}

\maketitle

\begin{abstract}
Given a coloring of the \homesubsets{k} of an uncountable separable metric space, we show that there exists an infinite monochromatic subset which \containsitslimit{}.
\end{abstract}

\section{Introduction}
Given a coloring of the \homesubsets{k} of an infinite metric space $X$, the \irt{} guaranties the existence of an infinite monochromatic subset $\monoset \subseteq X$. However, if $\monosetelement \in X$ is a limit point of $\monoset$, the \irt{} does not imply that $\monosetelement \in \monoset$. Therefore, it may be that $\monoset$ does not contain any of its limit points. We show that if additionally, $X$ is assumed to be uncountable and separable, then such an infinite monochromatic set which \containsitslimit{} exists.

The \tbht{}, proved by \schipperus{} \cite{R. Schipperus}, provides a stronger result for the case $k=2$, $X=\R$. The latter states that if the pairs of real numbers are colored with $c$ colors, there is a monochromatic, well-ordered subset of arbitrarily large countable order type which is closed in the usual topology of $\R$. Applying the \tbht{} to the special case where the order type is $\omega + 1$, provides Theorem~\ref{thm main theorem} of this note for $k=2, X=\R$. For $k=3$ however, we show that the result in this note cannot be strengthened in some sense.

In Section~\ref{sec infinite closed monochromoatic subsets} we state and prove Theorem~\ref{thm main theorem}. The proof relies on the Axiom of Choice. In Section~\ref{sec examples} we show why the assumption that $X$ is uncountable is required, and why we cannot expect a stronger result to hold: that there exists a monochromatic subset of $X$, which contains more than one of its limit points.


\section{Infinite closed monochromatic subsets}\label{sec infinite closed monochromoatic subsets}

\begin{thm}\label{thm main theorem}
Let $X$ be an uncountable separable metric space with metric $d$. Let $k>0, c>0$, and let
$$\chi: [X]^k \rightarrow \{1,2,\dots,\cv\}$$
be a coloring of the \homesubsets{k} of $X$ with $\cv$ colors. Then there exists an infinite monochromatic subset $\monoset \subseteq X$, and there exists $\monosetelement \in \monoset$
s.t. $\forall \epsilon > 0, \exists \monosetelement_{\epsilon} \in \monoset$ s.t. $d(\monosetelement_{\epsilon},\monosetelement) < \epsilon$.
\end{thm}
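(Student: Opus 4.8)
\emph{Plan.} I would combine an infinitary ``end‑homogeneous'' construction carried out \emph{inside} $X$ with the Infinite Ramsey Theorem, while carefully protecting one prospective limit point throughout the construction.

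\emph{Step 1: reduction and the basic tool.} Call $p\in X$ a condensation point if every neighbourhood of $p$ is uncountable. By separability the set $C$ of condensation points of $X$ is closed and $X\setminus C$ is countable, so $C$ is uncountable and every point of $C$ is a condensation point of $C$; it therefore suffices to prove the theorem with $X$ replaced by $C$, i.e. we may assume $X$ itself is perfect. The one combinatorial tool needed is a ``condensation pigeonhole'': if $A\subseteq X$ is uncountable, $p$ is a condensation point of $A$, and $A=A_1\cup\dots\cup A_m$, then $p$ is a condensation point of some $A_i$ (otherwise intersect the finitely many witnessing neighbourhoods of $p$).

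\emph{Step 2: the construction.} Fix a condensation point $r$ of $X$ (chosen admissibly; see the last paragraph). I would build points $r_1,r_2,\dots\in X\setminus\{r\}$ and a decreasing chain of uncountable sets $X=W_0\supseteq W_1\supseteq\cdots$ with, for every $n$: $r_n\in W_{n-1}$; $r\in W_n$ and $r$ is a condensation point of $W_n$; $\operatorname{diam}(W_n)\le 2^{-n}$; and, for every $F\subseteq\{r_1,\dots,r_n\}$ with $|F|=k-1$, the map $y\mapsto\chi(F\cup\{y\})$ is constant on $W_n$, with value $\psi_F$. To pass from $W_{n-1}$ to $W_n$: first pick $r_n\in W_{n-1}\setminus\{r,r_1,\dots,r_{n-1}\}$, then intersect with $B(r,2^{-n-1})$, then thin successively by each of the finitely many colourings $y\mapsto\chi\big((F''\cup\{r_n\})\cup\{y\}\big)$ with $F''\subseteq\{r_1,\dots,r_{n-1}\}$, $|F''|=k-2$, each time \emph{retaining the colour class that contains $r$} (so $\psi_{F''\cup\{r_n\}}=\chi(F''\cup\{r_n,r\})$). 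The condensation pigeonhole keeps every stage uncountable with $r$ a condensation point, provided each such thinning does not isolate $r$ from its own colour class; and since $r\in W_n$ with $\operatorname{diam}(W_n)\to 0$ we get $r_n\to r$.

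\emph{Step 3: finishing with Ramsey.} End‑homogeneity gives $\chi(\{r_{n_1},\dots,r_{n_k}\})=\psi_{\{r_{n_1},\dots,r_{n_{k-1}}\}}$ for $n_1<\dots<n_k$, so the colour of a $k$‑subset of $\{r_n:n\in\N\}$ depends only on its first $k-1$ elements. Apply the Infinite Ramsey Theorem to the finite colouring $F\mapsto\psi_F$ of $[\{r_n:n\in\N\}]^{k-1}$ to get an infinite $\Lambda_0\subseteq\{r_n:n\in\N\}$ on which $\psi_F\equiv\gamma$. Then $\chi$ is identically $\gamma$ on $[\Lambda_0]^k$; and for each $F\in[\Lambda_0]^{k-1}$, writing $m=\max\{i:r_i\in F\}$, we have $r\in W_m$ and $r\notin F$, hence $\chi(F\cup\{r\})=\psi_F=\gamma$. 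Thus $\Lambda:=\Lambda_0\cup\{r\}$ is monochromatic, and since $r_n\to r$ with $r\notin\Lambda_0$, the point $r\in\Lambda$ is a limit point of $\Lambda$, as required.

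\emph{The main obstacle} is the clause in Step 2 that $r$ must remain a condensation point of its colour class at every thinning — equivalently, $r$ must never be ``colour‑isolated'' relative to the finite sets $F''\cup\{r_n\}$ that occur, and moreover the ``bad'' choices of $r_n$ must be avoidable without losing uncountability near $r$. For a \emph{fixed} finite $G$ with $|G|\le k-1$, the set of points colour‑isolated relative to $G$ is countable, since each fibre of $y\mapsto\chi(G\cup\{y\})$, minus its own condensation points, is countable; this both lets one pick each new $r_n$ outside the countably many bad values and (by a counting argument over the relevant family of $G$'s) lets one pick $r$ admissibly in the first place. Turning this bookkeeping into a rigorous statement — and, with it, handling the fact that $X$ need not be complete, by anchoring the construction at the fixed point $r$ lying in every $W_n$ rather than at a merely Cauchy sequence — is the technical heart of the proof; everything else is the end‑homogeneity recursion plus one invocation of the Infinite Ramsey Theorem.
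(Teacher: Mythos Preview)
Your high-level strategy matches the paper's: construct a sequence $r_1,r_2,\dots\to r$ with the end-homogeneity property (this is exactly property~(a) in the paper's proof overview), then use the Erd\H{o}s--Rado reduction plus the Infinite Ramsey Theorem to extract a monochromatic set containing $r$. Your Step~3 is precisely what the paper does once such a sequence is in hand.

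Where the approaches diverge is in \emph{producing} the convergent end-homogeneous sequence. The paper does not fix $r$ in advance and does not attempt to keep the shrinking sets uncountable. Instead it fixes a well-ordering $\preccurlyeq$ of $X$ and a countable dense set, and for \emph{every} $x\in X$ runs a deterministic process: $u_{i+1}$ is the $\preccurlyeq$-least element of $U_{i+1}$, where $U_{i+1}$ is obtained from $U_i$ by intersecting with the colour classes determined by $x$ and with a shrinking ball around $x$. Always $x\in U_i$. If this process terminated for every $x$, the finite record $(\theta^s,\delta^s)$ of the run would give an injection of $X$ into a countable set, contradicting uncountability. Hence for some $x$ the process never terminates, and that $x$ serves as $u_\omega=r$.

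Your direct route tries instead to fix a condensation point $r$ and maintain the invariant ``$r$ is a condensation point of $W_n$''. You correctly flag the obstacle: when you thin by $y\mapsto\chi(G\cup\{y\})$ and keep the colour class \emph{containing $r$}, pigeonhole does not guarantee that $r$ is a condensation point of \emph{that particular} class --- only of some class. Your proposed fix, however, is circular as written. The statement ``for a fixed $G$ the set of colour-isolated points is countable'' is true, but the $(k-1)$-sets $G=F''\cup\{r_n\}$ that actually arise in your recursion depend on the $r_n$'s, which depend on the $W_n$'s, which depend on $r$; there is no fixed countable family of $G$'s to union over \emph{before} $r$ is chosen. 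Nor is it clear that, at a given stage, only countably many candidate $r_n$'s are ``bad'' for the already-chosen $r$.

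The natural way to break the circularity is to make the choice of each $r_n$ deterministic (say via a well-ordering) and then argue that only countably many $r$'s can have the process stall, because the stall-data determines $r$ injectively --- but that is exactly the paper's argument. So your plan is aimed correctly, and your idea of anchoring the construction at a fixed $r\in X$ is the right response to the lack of completeness; but the ``bookkeeping'' you defer is, in effect, the entire content of the paper's proof, and the counting sketch you give does not yet carry it.
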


We first list the notation which is used in the proof:
\begin{itemize}
\item For a set $A$, $[A]^k$ denotes the set of \homesubsets{k} of $A$.
\item $\fbin$ where $s \in \N, s\geq k-1$ denotes a function$$\fbin : \{k-1,k,k+1,\dots,s\} \rightarrow \N.$$ $\fbinseq$ is the set of all such functions, for all $s>0$.

\item $\sbin$ denotes a function which is defined on sets $\{i_1,i_2,\dots,i_{k-1}\}$ where $i_1,\dots,i_{k-1} \in \N$ and
    $i_1,\dots,i_{k-1} \leq s,$ and $\sbin(\{i_1,i_2,\dots,i_{k-1}\}) \in \{1,2,\dots,c\}$. Equivalently, $\sbin$ is a coloring of the \homesubsets{(k-1)} of $\{1,\dots,s\}$ with $c$ colors. $\sbinseq{}$ is the set of all such functions, for all $s>0$.

\item Let $\btwn{j}{1}{c}$, then
$$\neib{j}{a_1,\dots,a_{k-1}}:=\{a \in X : \chi(\{a,a_1\dots,a_{k-1}\})=j\}.$$

    \item For $x \in X, \varphi \in \R$, $B(x,\varphi)$ denotes the ball of radius $\varphi$ around $x$ with respect to $d$.
    \item $\{\denssetelement_n\}_{n \in \N}$: a countable subset of $X$ which is dense in $X$. Such set exists by our assumption that $X$ is separable.
        \item $\preccurlyeq$: well-ordering of $X$.

\end{itemize}

\subsection*{Proof overview}

\begin{enumerate}
\item \label{item monotoneset contains monochromatic set}Suppose there exists a subset of $X$: $\monotoneset{X}$
    , which satisfies the following properties:
\begin{enumerate}
\item Let $\al_1,\al_2,...,\al_{k-1} \in \N$ be distinct numbers, then $\forall \al \in \N, \alpha > \al_{k-1}$. \label{item monotone porperty}
 \begin{equation}\label{eq monotone property}
   \chi(\{\monotoneelement{}_{\al_1},\dots,\monotoneelement{}_{\al_{k-1}},\monotonelast{} \}) = \chi(\{\monotoneelement{}_{\al_1},\dots,\monotoneelement{}_{\al_{k-1}},\monotoneelement{}_{\al} \}).
 \end{equation}
 \item $\lim_{\al \to \infty} \monotoneelement{}_{\al} = \monotonelast$. \label{item limit property}

\end{enumerate}

Then by a known construction of Erd\"{o}s and Rado \cite{ER}, it follows from property \ref{item monotone porperty} that $\monotoneset{X}$ contains an infinite monochromatic subset $\monoset$ such that $\monotonelast \in \monoset$.

By setting $r = \monotonelast$ it follows from property \ref{item limit property} that $\monoset$ satisfies the requirements of the theorem. Therefore it remains to show that there exists such set $\monotoneset{X}$.

\item \label{item proof step contrary assumption}Suppose on the contrary, that such set $\monotoneset{X}$ does not exist.
\item \label{item proof step function construction}Define a function

\begin{equation}\label{eq function f definition}
    f: X \backslash \{x_1,x_2,\dots,x_{k-1}\} \rightarrow \fbinseq \times \sbinseq{},
\end{equation}
    where $x_{1},x_{2},\dots,x_{k-1}$ are the first minimal elements in $X$ according to $\preccurlyeq$. In other words, $f$ assigns two finite functions every $x \in X$, except for the first $k-1$ most minimal elements of $X$.
\item \label{item proof step show one to one}Show that the function $f$ is one-to-one.
\item The above is contradiction, since by the theorem assumption $X$ is uncountable, while $\fbinseq \times \sbinseq$ is countable.
\end{enumerate}

\subsection*{Proof of Theorem~\ref{thm main theorem}}

\begin{proof}
Suppose that there exists a subset $\monotoneset{X}$ which satisfies properties \ref{item monotone porperty} and \ref{item limit property} as stated in step~\ref{item monotoneset contains monochromatic set} of the proof overview. Then as described in step \ref{item monotoneset contains monochromatic set} of the proof overview, the theorem follows. Hence it remains to prove that such subset $\monotoneset{X}$ exists.

For the sake of completeness, we now describe briefly the construction of Erd\"{o}s and Rado which is mentioned in the proof overview. First, we define a coloring $\tilde{\chi}$, of the \homesubsets{(k-1)} of $\monotonesetnolast{X}$ by $\tilde{\chi}(\{u_{\al_1},\dots,u_{\al_{k-1}}\})=\chi(\{u_{\al_1},\dots,u_{\al_{k-1}},u_{\omega}\})$. Now applying the \irt{} to this coloring, we conclude that there exists an infinite subset $\tilde{\monoset}$ of $\monotonesetnolast{X}$, which is monochromatic with respect to $\tilde{\chi}$. Then $\tilde{\monoset} \cup \{u_{\omega}\}$ forms an infinite subset of $X$ which is monochromatic with respect to $\chi$. Hence taking $\monoset = \tilde{\monoset} \cup \{u_{\omega}\}$ we get $\monoset$ as required.

As stated step \ref{item proof step contrary assumption} of the proof overview, we now assume that such set does not exist.

We now proceed by defining a function $f$ as described in step \ref{item proof step function construction} of the proof overview:

Let $x \in X$. We now define $f(x)$. We first define $\fbin, \sbin$, where $\fbin \in \fbinseq$, $\sbin \in \sbinseq$ for some $s>0$, and $f(x)$ will be defined by $f(x)=(\fbin,\sbin)$.

For the purpose of defining $\fbin,\sbin$, we define recursively for each $i \leq s+1$, a set $U_i \subseteq X$ and $u_i \in U_i$. The general idea is that we want to encode some information in $\fbin, \sbin$ so that later we can reconstruct $U_i$ and $u_i$ without knowing $x$.

Let $u_{1},u_{2},\dots, u_{k-1} \in X$ be the first $k-1$ minimal elements in $X$ according to $\preccurlyeq$. Let $U_{k-1}=X$.



Now, suppose we already defined $U_{j}, u_{j}$ for $1\leq j \leq i$ and $$\sbin{}(\{j_1,j_2,\dots,j_{k-1}\}), \fbin{}(j)$$ for $k-1 \leq j \leq i-1$, $1 \leq j_1,\dots,j_{k-1} \leq i-1$. We now define $U_{i+1}, u_{i+1}$, $\fbin(i)$ and $$\sbin(\{i,j_1,\dots,j_{k-2}\}), \forall {j_1,\dots,j_{k-2} \leq i-1}.$$

Let $\denssetelement_n$ where $n$ is the minimal number such that the following holds:
\begin{equation}\label{eq x is in the ball}
x \in B(\denssetelement_n,2^{-i})
\end{equation}

Define
\begin{equation}\label{eq define fbin}
\fbin(i)=n.
\end{equation}
Define

\begin{equation}\label{eq second binary assignment}
  \sbin(\{i,j_1\dots,j_{k-2}\}): = \chi(\{x,u_{i}, u_{j_1},\dots,u_{j_{k-2}}\}),
\end{equation}

and define

\begin{equation}\label{eq definition of recorsive set}
U_{i+1} := \bigcap_{j_1,\dots, j_{k-2} < i}
\neib{\sbin(\{i,j_1\dots,j_{k-2}\})}{u_{i},u_{j_1}\dots,u_{j_{k-2}}}\cap B(\denssetelement_{\fbin(i)},2^{-i}) \cap U_{i}
\end{equation}

Define $u_{i+1}$ to be the minimal element in $U_{i+1}$ according to $\preccurlyeq$.

We have the following observations:
\begin{itemize}
\item $\forall i, x \in U_i$. In particular, $U_i$ is not empty.
\item After a finite number of steps $s>0$, we must get to a situation where $x$ is the minimal element in $U_{s+1}$. Otherwise, by setting $\monotonelast=x$, we get an infinite sequence $\monotoneset{X}$ which satisfies the properties as described in step \ref{item monotoneset contains monochromatic set} of the proof overview, which is a contradiction to our assumption.
\end{itemize}
 \item after $s$ steps, we define $f(x)=(\fbin, \sbin)$.

Step \ref{item proof step show one to one} in the proof overview: it remains to show that $f$ is one-to-one. Suppose we are given $f(x)$, or in the notation above $\fbin, \sbin$. In order to show that $f$ is one-to-one, we need to show that we can determine $x$.
In the process described above, the same $U_1,\dots,U_{s+1}$ and $u_1,\dots,u_{s+1}$ can be recovered without knowing $x$, since $\fbin,\sbin$ are known and by Eq.~\ref{eq definition of recorsive set} for each $i \leq s$ we can calculate $U_{i+1}$, and $u_{i+1}$ is the minimal element in $U_{i+1}$.

Therefore after reconstruction of the same $s$ steps we get $U_{s+1}$, and $x=u_{s+1}$ is the minimal element of $U_{s+1}$. In other words, if $x_1,x_2$ are such that $f(x_1)=f(x_2)$, then both $x_1$ and $x_2$ must be the minimal element of $U_{s+1}$. Since $U_{s+1}$ does not depend on $x_1,x_2$ we must have $x_1=x_2$. As stated in the proof overview this is a contradiction.

\end{proof}

\section{Counterexamples}\label{sec examples}

The following example shows a coloring for the case $k=3$, where any monochromatic subset has at most one limit point. Hence, for $k=3$ we cannot always expect a monochromatic set which contains more than one limit point of itself. In this sense, Theorem~\ref{thm main theorem} cannot be strengthened.

\begin{example}\label{ex 3 coloring of R}
Define a coloring of the \homesubsets{3} of $\R$ by
\begin{equation}\label{eq example only one limit point}
\chi(\{x_1,x_2,x_3\}) = \left\{
	\begin{array}{ll}
		0  & \mbox{if } \realdist{x_1}{x_2} \leq \realdist{x_2}{x_3} \\
		1 & \mbox{if } \realdist{x_1}{x_2} > \realdist{x_2}{x_3}
	\end{array},
\right.
\end{equation}
where $x_1 < x_2 < x_3$.

Suppose $\monoset \subseteq \R$ is an infinite subset of $\R$ which has two distinct limit points: $l_1,l_2 \in \R$, where $l_1 < l_2$ and $\realdist{l_1}{l_2}=h$. We now show that $\monoset$ cannot be monochromatic. Let $\monosetelement_1 \in \monoset$ s.t. $\realdist{\monosetelement_1}{l_l} < \frac{h}{5}$. Let $\monosetelement_2,\monosetelement_3 \in \monoset \mbox{ s.t. } \monosetelement_2 < \monosetelement_3$ and $\realdist{\monosetelement_2}{l_2}< \frac{h}{5}, \realdist{\monosetelement_3}{l_2} < \frac{h}{5}$.
Then $\realdist{\monosetelement_1}{\monosetelement_2} > h - \frac{h}{5}-\frac{h}{5} = \frac{3h}{5}$ while $\realdist{\monosetelement_2}{\monosetelement_3} \leq |\monosetelement_3-l_2|+|\monosetelement_2-l_2| \leq \frac{h}{5}+\frac{h}{5} = \frac{2h}{5}$. Hence $\realdist{\monosetelement_1}{\monosetelement_2} > \realdist{\monosetelement_2}{\monosetelement_3}$ and $\chi(\{\monosetelement_1,\monosetelement_2,\monosetelement_3\} )=1$. On the other hand, by a symmetric argument, taking $\monosetelement_1,\monosetelement_2 \in \R \mbox{ s.t. } \monosetelement_1 < \monosetelement_2, \realdist{\monosetelement_1}{l_1}< \frac{h}{5}, \realdist{\monosetelement_2}{l_1} < \frac{h}{5}$ and $\monosetelement_3 \in \R$ s.t. $\realdist{\monosetelement_3}{l_2} < \frac{h}{5}$, we get $\realdist{\monosetelement_1}{\monosetelement_2} < \realdist{\monosetelement_2}{\monosetelement_3}$. Hence $\chi(\{\monosetelement_1,\monosetelement_2,\monosetelement_3\}=0$. This shows that $\monoset$ is not monochromatic.
\end{example}

In the following two examples we show why the assumption that $X$ is uncountable is required in Theorem~\ref{thm main theorem}. This is done by defining a coloring of the \homesubsets{k} of $\Q$, for which an infinite monochromatic subset does not contain any of its limit points. In the following two examples we assume $(\denssetelement_n)_{n \in \N}$ is an enumeration of $\Q$, and $\preccurlyeq$ is an order on $\Q$, defined by $\denssetelement_{i} \preccurlyeq \denssetelement_{j}$ if and only if $i \leq j$.

Example~\ref{ex 2 coloring of Q} is the construction of Sierpinsky \cite{Sierpnisky}, applied to $\Q$.
\begin{example}\label{ex 2 coloring of Q}
Define a coloring of pairs of numbers in $\Q$ by
\begin{equation}\label{eq example 2 coloring of Q}
\chi(\{x_1,x_2\}) = \left\{
	\begin{array}{ll}
		0  & \mbox{if } x_2 \preccurlyeq x_1  \\
		1 & \mbox{if } x_1 \preccurlyeq x_2
	\end{array},
\right.
\end{equation}
where $x_1 < x_2$.
\end{example}

\begin{example}\label{ex 3 coloring of Q}
Define a coloring of \homesubsets{3} of $\Q$ by
\begin{equation}\label{eq example only one limit point}
\chi(\{x_1,x_2,x_3\}) = \left\{
	\begin{array}{ll}
		0  & \mbox{if } x_1 \preccurlyeq x_2 \mbox{ and }x_3 \preccurlyeq x_2\\
		1 & otherwise
	\end{array},
\right.
\end{equation}
where $x_1 < x_2 < x_3$.
\end{example}

We claim the following.
\begin{claim}\label{claim examples over Q cannot contain closed subset}
\begin{enumerate}

\item \label{claim item coloring 2 of Q}In the coloring of Example~\ref{ex 2 coloring of Q} there exist no infinite monochromatic subset which \containsitslimit{}.
    \item\label{claim item coloring 3 of Q} In the coloring of Example~\ref{ex 3 coloring of Q} there exists no infinite subset, all whose \homesubsets{3} are colored $1$, which \containsitslimit{}, and there exists no subset, all whose \homesubsets{3} are colored $0$ which contains more that $3$ elements.
        \end{enumerate}
\end{claim}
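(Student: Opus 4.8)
The plan is to verify each part by exhibiting, for every infinite monochromatic (or $1$-chromatic) subset, an explicit witness showing it fails to contain a limit point, using the fact that the well-ordering $\preccurlyeq$ on $\Q$ is order-isomorphic to $\N$ via the enumeration $(\denssetelement_n)$. The key structural observation throughout is that any infinite subset $S \subseteq \Q$ has a unique $\preccurlyeq$-minimal element, and more generally $(S,\preccurlyeq)$ has order type $\omega$ (since it is an infinite subset of $(\Q,\preccurlyeq)\cong\omega$), so ``large with respect to $\preccurlyeq$'' behaves like ``large index in the enumeration''.

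For part (\ref{claim item coloring 2 of Q}), first I would suppose $\monoset$ is an infinite monochromatic subset containing a limit point $\monosetelement$; then $\monoset \setminus \{\monosetelement\}$ still accumulates at $\monosetelement$, so I can pick infinitely many elements of $\monoset$ distinct from $\monosetelement$ and converging to $\monosetelement$ in the usual metric, lying alternately on the left and right of $\monosetelement$ if needed, but in any case I can find $x, y \in \monoset$ with $x \ne y$ arbitrarily $\preccurlyeq$-large. The point is that the color of $\{x,y\}$ (for $x<y$ in the usual order) records whether the usual-order-larger of the two is $\preccurlyeq$-larger or $\preccurlyeq$-smaller. Since $\monoset$ is infinite, I can find two pairs from $\monoset$ realizing both configurations: pick $a<b<c$ in $\monoset$ (usual order) with $a,b,c$ all close to $\monosetelement$ — among the three elements the $\preccurlyeq$-order can be read off, and by choosing which of them has which $\preccurlyeq$-rank (possible because near $\monosetelement$ there are infinitely many elements of $\monoset$ with arbitrarily large $\preccurlyeq$-rank and also some with small $\preccurlyeq$-rank, namely the $\preccurlyeq$-least element of $\monoset$ close to $\monosetelement$ if $\monosetelement$ is itself $\preccurlyeq$-small, or otherwise just two elements whose $\preccurlyeq$-order disagrees with their usual order versus one pair where it agrees), I get $\chi$ taking both values on $[\monoset]^2$. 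Cleanest: choose $p \in \monoset$ the $\preccurlyeq$-least element near $\monosetelement$; then for any other $q \in \monoset$ near $\monosetelement$, the pair $\{p,q\}$ has color determined by the usual order of $p,q$; picking $q$ on each side of $p$ gives both colors. This contradicts monochromaticity.

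For part (\ref{claim item coloring 3 of Q}), the $0$-homogeneous claim is the easy half: if $x_1<x_2<x_3$ (usual order) have all triples colored $0$, the condition ``$x_1 \preccurlyeq x_2$ and $x_3 \preccurlyeq x_2$'' must hold, i.e.\ the usual-middle element is $\preccurlyeq$-maximal; in a set of four elements $x_1<x_2<x_3<x_4$ one checks the triple $\{x_1,x_2,x_4\}$ forces $x_2$ $\preccurlyeq$-maximal among those three while $\{x_1,x_3,x_4\}$ forces $x_3$ $\preccurlyeq$-maximal among those three, and combining with $\{x_2,x_3,x_4\}$ (forcing $x_3\preccurlyeq$-maximal there, hence $x_3\succ x_2$) against $\{x_1,x_2,x_3\}$ (forcing $x_2$ $\preccurlyeq$-maximal, hence $x_2 \succ x_3$) yields a contradiction — so any $0$-homogeneous set has at most $3$ elements. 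For the $1$-homogeneous half, suppose $S$ is infinite with all triples colored $1$ and $\monosetelement \in S$ a limit point; the defining rule says a triple is colored $1$ unless its usual-middle element is $\preccurlyeq$-maximal, so $S$ being $1$-homogeneous means: for \emph{no} usual-order-increasing triple $x_1<x_2<x_3$ from $S$ is $x_2$ the $\preccurlyeq$-largest. Now I would exploit the limit point: near $\monosetelement$ pick $x_1 < x_2 < x_3$ in $S$ with $x_2$ \emph{equal to} (or chosen as) the element of largest $\preccurlyeq$-rank among three consecutive-enough approximants — concretely, since $S$ accumulates at $\monosetelement$, choose any $y_1<y_2$ in $S$ on the left of $\monosetelement$ and $y_3$ in $S$ on the right, all within $\epsilon$ of $\monosetelement$; whichever of $y_1,y_2,y_3$ is $\preccurlyeq$-largest, I can re-choose one of the three from the infinitely many available elements of $S$ near $\monosetelement$ so that the usual-middle one is the $\preccurlyeq$-largest (there are elements of arbitrarily large $\preccurlyeq$-rank near $\monosetelement$, so put one such in the middle slot and take the other two with smaller rank on either side). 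That triple is colored $0$, contradiction.

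The main obstacle is the bookkeeping in part (\ref{claim item coloring 3 of Q}), $1$-homogeneous case: one must argue that near the limit point $\monosetelement$ one can always arrange the usual-order-middle element to carry the largest $\preccurlyeq$-rank of the chosen triple. This rests on the lemma that in any infinite $S\subseteq\Q$ accumulating at $\monosetelement$, and for any $N$, there is $s\in S$ with $\realdist{s}{\monosetelement}<\epsilon$ and $\preccurlyeq$-rank $>N$ (true because only finitely many rationals have rank $\le N$, yet infinitely many elements of $S$ lie within $\epsilon$ of $\monosetelement$), together with the symmetric observation that we can also find two further elements of $S$ near $\monosetelement$, one on each side of that chosen $s$ in the usual order — which is possible unless $\monosetelement$ is approached from only one side, a case handled by instead taking all three approximants on that one side and picking the one of intermediate usual-order-position to have maximal rank. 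I expect the right order of operations is: (i) reduce to ``$S$ has order type $\omega$ under $\preccurlyeq$'' and fix notation; (ii) dispatch Example~\ref{ex 2 coloring of Q}; (iii) dispatch the $0$-homogeneous bound; (iv) prove the rank-density lemma; (v) finish the $1$-homogeneous case.
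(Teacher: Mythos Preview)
Your $0$-homogeneous bound in part~(\ref{claim item coloring 3 of Q}) is correct and essentially matches the paper's argument. However, your arguments for part~(\ref{claim item coloring 2 of Q}) and for the $1$-homogeneous half of part~(\ref{claim item coloring 3 of Q}) share a common gap: they assume you can place elements of $\monoset$ (resp.\ $S$) on \emph{both} sides, in the usual order, of some chosen element, and this need not be possible. In your ``cleanest'' argument for part~(\ref{claim item coloring 2 of Q}), if $\monosetelement$ is approached only from the left and the $\preccurlyeq$-least element $p$ of $\monoset$ near $\monosetelement$ happens to be the leftmost such element (or is $\monosetelement$ itself), there is no $q \in \monoset$ near $\monosetelement$ with $q < p$, so you only witness one color. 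Likewise, in the one-sided subcase of part~(\ref{claim item coloring 3 of Q}) you propose to ``take all three approximants on that one side and pick the one of intermediate usual-order position to have maximal rank''; but once two of the three are fixed there need not be any element of $S$ strictly between them, and if instead you fix the high-rank middle element first you cannot guarantee an element of smaller $\preccurlyeq$-rank on its far side.

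The paper closes this gap with a single lemma that handles both parts at once: any infinite $\monoset \subseteq \Q$ containing a limit point contains $r_1 < r_2 < r_3$ with $r_1 \preccurlyeq r_2$ and $r_3 \preccurlyeq r_2$. The idea you are missing is to take the limit point itself as one vertex of the triple, say $r_3$. Assuming accumulation from the left, first choose $r_1 \in \monoset$ with $r_1 < r_3$ and $r_3 \preccurlyeq r_1$ (possible since only finitely many rationals are $\preccurlyeq$-below $r_3$), then choose $r_2 \in \monoset \cap (r_1, r_3)$ with $r_1 \preccurlyeq r_2$; this second choice is possible precisely because $r_3$ is a limit point, so $\monoset \cap (r_1, r_3)$ is infinite. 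The resulting triple gives $\chi(\{r_1,r_2\}) \neq \chi(\{r_2,r_3\})$ in Example~\ref{ex 2 coloring of Q} and $\chi(\{r_1,r_2,r_3\}) = 0$ in Example~\ref{ex 3 coloring of Q}. Using the limit point as the outermost vertex is exactly what guarantees the intermediate interval meets $\monoset$ infinitely often and removes the obstruction in your sketch.
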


Part~\ref{claim item coloring 2 of Q} of Claim~\ref{claim examples over Q cannot contain closed subset} shows that for coloring \homesubsets{2} the requirement in Theorem~\ref{thm main theorem} that $X$ is uncountable, is necessary.

We may ask whether for some $l \geq k$ it holds that when coloring the \homesubsets{k} of $\Q$ there must be either a subset of size $l$, all whose \homesubsets{k} are colored $0$, or an infinite subset which \containsitslimit{}, all whose \homesubsets{k} are colored $1$. For $l=k$ this holds trivially. Part~\ref{claim item coloring 3 of Q} of Claim~\ref{claim examples over Q cannot contain closed subset} shows that for $k=3$, this does not generally hold for $l>3$.

We first prove the following claim, which we use in the proof of Claim~\ref{claim examples over Q cannot contain closed subset}.

\begin{claim}\label{claim special triple}
If $\monoset$ is an infinite subset of $\Q$ which \containsitslimit{}, then there exist $\monosetelement_1,\monosetelement_2,\monosetelement_3 \in \monoset$ s.t. $\monosetelement_1 < \monosetelement_2 < \monosetelement_3$, $\monosetelement_1 \preccurlyeq \monosetelement_2$ and $\monosetelement_3 \preccurlyeq \monosetelement_2$.
\end{claim}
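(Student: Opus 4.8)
The plan is to work with the enumeration $(\denssetelement_n)_{n\in\N}$ of $\Q$ underlying these examples, so that $\denssetelement_i \preccurlyeq \denssetelement_j$ exactly when $i \le j$, and to use the single fact about $\N$ that there is no infinite strictly decreasing sequence of natural numbers. Fix a limit point $\monosetelement \in \monoset$ with $\monosetelement \in \monoset$ and write $\monosetelement = \denssetelement_N$.

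The first move is a reduction. Let $\monoset' = \{x \in \monoset : x = \denssetelement_n \text{ for some } n > N\}$. At most $N$ elements of $\monoset$ lie in $\{\denssetelement_1,\dots,\denssetelement_N\}$, so $\monoset'$ is cofinite in $\monoset$, hence infinite, and deleting finitely many points leaves $\monosetelement$ a limit point of $\monoset'$ (note $\monosetelement \notin \monoset'$). The gain is that now $\monosetelement \prec x$ for every $x \in \monoset'$, so $\monosetelement$ can always serve as one of the two ``$\preccurlyeq$-small'' outer points of the triple.

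Since $\monosetelement$ is a limit point of $\monoset' \subseteq \R$, it is a one-sided limit point: either every interval $(\monosetelement - \epsilon,\monosetelement)$ meets $\monoset'$, or every interval $(\monosetelement,\monosetelement+\epsilon)$ meets $\monoset'$. Consider the first case; the second is entirely symmetric, with ``increasing'' replaced by ``decreasing''. Then one can build a strictly increasing sequence $a_1 < a_2 < \cdots$ in $\monoset'$ with all $a_i < \monosetelement$. Each $a_i$ equals $\denssetelement_{n_i}$ for a unique $n_i \in \N$, and the $n_i$ are pairwise distinct; since $(n_i)_{i}$ cannot be strictly decreasing, there is some $j \ge 2$ with $n_{j-1} < n_j$, i.e. $a_{j-1} \preccurlyeq a_j$. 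Now take $\monosetelement_1 = a_{j-1}$, $\monosetelement_2 = a_j$, $\monosetelement_3 = \monosetelement$: as reals $a_{j-1} < a_j < \monosetelement$, so $\monosetelement_1 < \monosetelement_2 < \monosetelement_3$; $\monosetelement_1 \preccurlyeq \monosetelement_2$ by the choice of $j$; and $\monosetelement_3 \preccurlyeq \monosetelement_2$ since $\monosetelement \prec a_j$ by the reduction. In the symmetric case one takes instead $\monosetelement_1 = \monosetelement$, $\monosetelement_2 = a_j$, $\monosetelement_3 = a_{j-1}$.

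I expect the only real content — and the place to be careful — to be the reduction to $\monoset'$: without first discarding the $\preccurlyeq$-small part of $\monoset$, the limit point $\monosetelement$ need not be $\preccurlyeq$-below the points one extracts near it, and the combinatorial step (an injective $\N$-valued sequence must increase somewhere) would not on its own produce a triple whose middle term has the largest index. The remaining ingredients — that a limit point in $\R$ is a one-sided limit point, and that a one-sided limit point yields a strictly monotone sequence lying inside $\monoset'$ — are routine.
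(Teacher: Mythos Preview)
Your proof is correct and follows essentially the same route as the paper's: fix the limit point as one of the outer terms of the triple, discard the finitely many points that are $\preccurlyeq$-below it, split on the side from which it is approached, and then extract the two inner points. The only cosmetic difference is in that last step: the paper applies the cofiniteness trick a second time (restricting to a smaller one-sided neighborhood in which everything is $\preccurlyeq$-above the first chosen point $r_1$, then picking $r_2$ there), whereas you build a full monotone sequence and invoke the well-foundedness of $\N$ to locate an adjacent $\preccurlyeq$-increasing pair.
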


\begin{proof}
Let $\monosetelement_3$ be a limit point of $\monoset$. For $\varphi \in \R$, Let $\monoset_{\varphi}' = \{b \in \monoset: \varphi < b < \monosetelement_3\}$ and let $\monoset_{\varphi}'' = \{b \in \monoset: \varphi > b > \monosetelement_3\}$. Since $\monosetelement_3$ is a limit point of $\monoset$, it must be that either $\monoset_{\varphi}'$ is not empty for all $\varphi < r_3$, or $\monoset_{\varphi}''$ is not empty for all $\varphi > \monosetelement_3$.

Assume first that $\monoset{}_{\varphi}'$ is not empty for all $\varphi < \monosetelement_3$ . There exist only finitely many $x \in \monoset'$ s.t. $x \preccurlyeq r_3$. Hence for some $\varphi_1 \in \R$, $0 < \varphi_1 < \monosetelement_3$, $\forall x \in \monoset{}_{\varphi_1}'$, we have $\monosetelement_3 \preccurlyeq x$. Let $\monosetelement_1 \in \monoset{}_{\varphi_1}'$. There exists $\varphi_2$ s.t. $\monosetelement_1 < \varphi_2 < \monosetelement_3$ and $\forall x \in \monoset_{\varphi_2}'$, we have $\monosetelement_1 \preccurlyeq x$. Let $\monosetelement_2 \in \monoset{}_{\varphi_2}'$. Then $\monosetelement_1 < \monosetelement_2 < \monosetelement_3$ and $\monosetelement_3 \preccurlyeq \monosetelement_1 \preccurlyeq \monosetelement_2$ as
desired.

Now, if $\monoset_{\varphi}''$ is not empty for all $\varphi>r_3$, by a symmetric argument it follows that
$\monosetelement_3 < \monosetelement_2 < \monosetelement_1$ and $\monosetelement_3 \preccurlyeq \monosetelement_1 \preccurlyeq \monosetelement_2$.
Swapping $\monosetelement_1$ and $\monosetelement_3$ gives us that $\monosetelement_1,\monosetelement_2,\monosetelement_3$ satisfy the requirements as desired.
\end{proof}

We now show why Claim~\ref{claim examples over Q cannot contain closed subset} holds.
\begin{proof}
\begin{enumerate}

\item Let $\chi$ be a coloring as in Example~\ref{ex 2 coloring of Q}. Let $\monoset$ be a subset of $\Q$ which \containsitslimit{}. Let $\monosetelement_1,\monosetelement_2,\monosetelement_3 \in \monoset$ as in Claim~\ref{claim special triple}. Then $\chi(\{\monosetelement_1,\monosetelement_2\}) \not= \chi(\{\monosetelement_2,\monosetelement_3\}$. Hence the claim follows.
\item Let $\chi$ be a coloring as in Example~\ref{ex 3 coloring of Q}, and let $\monoset$ be an infinite subset of $\Q$ which \containsitslimit{}. Let $r_1,r_2,r_3 \in \monoset$ as in Claim~\ref{claim special triple}. Then $\chi(\{\monosetelement_1,\monosetelement_2,\monosetelement_3\}) = 0$. Hence not all $3$-element subsets of $\monoset$ are colored $1$. On the other hand, suppose on the contrary that there exist $r_1,r_2,r_3,r_4 \in \Q$ s.t. $\monosetelement_1<\monosetelement_2<\monosetelement_3<\monosetelement_4$ and all $3$-element subsets of $\{\monosetelement_1,\monosetelement_2,\monosetelement_3,\monosetelement_4\}$ are colored $0$. Then considering $\monosetelement_1,\monosetelement_2,\monosetelement_3$ we must have $\monosetelement_3 \preccurlyeq \monosetelement_2$. On the other hand, considering $\monosetelement_2,\monosetelement_3,\monosetelement_4$ we have $\monosetelement_2 \preccurlyeq \monosetelement_3$. The last is a contradiction. Hence such monochromatic subset of size $4$ does not exist.
\end{enumerate}
\end{proof}

\end{document}